\theoremstyle{plain}
\newtheorem{theorem}{Theorem}
\newtheorem{proposition}{Proposition}
\newtheorem{lemma}{Lemma}
\theoremstyle{definition}
\newtheorem{definition}{Definition}
\newtheorem{example}{Example}
\newtheorem{notation}{Notation}
\newtheorem{remark}{Remark}
\newcommand{\enm}[1]{\ensuremath{#1}} 
\newcommand{\PP}{\enm{\mathbb {P}}}
\newcommand{\NN}{\enm{\mathbb {N}}}
\newcommand{\CC}{\enm{\mathbb {C}}}
\newcommand{\Oo}{\enm{\mathcal {O}}}
\newcommand{\Uu}{\enm{\mathcal {U}}}
\newcommand{\Rr}{\enm{\mathcal {R}}}
\newcommand{\Cc}{\enm{\mathcal {C}}}
\newcommand{\RR}{\enm{\mathbb {R}}}
\newcommand{\Ss}{\enm{\mathcal {S}}}
\newcommand{\Vv}{\enm{\mathcal {V}}}
\begin{document}

\title[Typical and Admissible ranks over fields]
{Typical and Admissible ranks over fields}
\author{Edoardo Ballico}
\address{Dept. of Mathematics\\
 University of Trento\\
38123 Povo (TN), Italy}
\email{ballico@science.unitn.it}
\author{Alessandra Bernardi}
\address{Dept. of Mathematics\\
 University of Trento\\
38123 Povo (TN), Italy}
\email{alessandra.bernardi@unitn.it}
\thanks{The author was partially supported by MIUR and GNSAGA of INdAM (Italy).}
\subjclass[2010]{15A69;14N05; 14P99} 
\keywords{tensor rank; symmetric tensor rank; real symmetric tensor rank}

\begin{abstract} Let $X(\RR)$ be a  geometrically connected variety defined over $\RR$  
and such that the set of all its (also complex) points $X(\CC)$ is non-degenerate.
 We introduce the notion of \emph{admissible rank}   
 of a point $P$ with respect to $X$  to be  the minimal cardinality of a set of points of $X(\CC)$ such that $P\in \langle S \rangle$ that is stable under conjugation. 
 Any set evincing the admissible rank can be equipped with a \emph{label} keeping track of the number of its complex and real points. We show that in the case of generic identifiability there is an open dense euclidean subset of points with certain admissible rank for any possible label. Moreover we show that if $X$ is a rational normal curve than there always exists a label for the generic element. We present two examples in which either the label doesn't exists or the admissible rank is strictly bigger than the usual complex rank.
\end{abstract}

\maketitle
\section*{Introduction}
A very important problem in the framework of Tensor Decomposition is to understand when a given real tensor $T$ can be written as a linear combination of real rank 1 tensors with a minimal possible number of terms, and that number is called the rank of $T$. For the applications it is also often very useful to know which are the typical ranks, i.e. the ranks for which it is possible to find an euclidean open set  of tensors with that given rank. For the specific case of tensors, computing the rank of $T$  corresponds to find the smallest  projective space $s$-secant to the real Segre variety containing $[T]$. One can clearly define all these concepts for any variety $X$ and saying that the real rank of $P\in \langle X\rangle$ is the minimal cardinality of a set of points of $X$ whose span contains $P$.
In order to be as much as general as possible we will always work with a real field $K$ instead of over $\mathbb{R}$ and its real closure  $\mathcal{R}$. We will indicate with $\Cc := \Rr (i)$ the algebraic closure of $\Rr$, but the reader not interested in abstract fields may take $\RR$ instead of $\Rr$ and
$\CC$ instead of $\Cc$.

\def\niente{
A first possible generalization is to  work over a real field $K$ instead of over $\mathbb{R}$, and to define the rank with respect to a geometrically integral non degenerate projective variety $X$ defined over $K$ (instead of restricting ourselves to the specific  case of Segre variety) such
that $X(K)$ is Zariski dense in $\overline{X}:=X(\overline{K})$ and such that $X$ is embedded in a projective space. This is what we do in Section \ref{first}. 
For each  $P\in \PP^r(K)$ the  \emph{$X(K)$-rank} $r_{X(K)}({P})$ of $P$ is the minimal cardinality of a subset  $S\subset X(K)$
such that $P\in \langle S\rangle$, where $\langle \  \rangle$ denotes the linear span. 
We will say that if 
\begin{equation}\label{UKa}U_{K,a}:= \left\{P\in \PP^r(K)\; | \; r_{X(K)}(P)=a\right\}\end{equation}
has a non empty interior with respect to the Zariski topology, then $a$ is a \emph{$K$-dense rank} (see Definition \ref{dense}).
If $\mathcal{R}$ is the real closure of $K$, the definition of \emph{$\mathcal{R}$-typical rank} is the same of ``typical rank" over $\mathbb{R}$ (see also Definition \ref{typical}).
In Theorem \ref{b2} we prove that every $\mathcal{R}$-typical rank is also a $K$-dense rank.

\smallskip

A  further generalization could be to define the rank with respect to the join of various varieties $X_i(K)\subset \mathbb{P}^r(K)$, instead of one single variety. This is what we do in Section \ref{second}.
More precisely the \emph{rank $r_{X_i,i\ge 1}(P)$} of a point $P\in \mathbb{P}^r(K)$ is the minimal cardinality of a finite set $I\subset \NN\setminus \{0\}$ such that there is $P_i\in X_i(K)\subset \mathbb{P}^r(K)$, $i\in I$,
with $P\in \langle \{P_i\}_{i\in I}\rangle$ (see Definition \ref{rank:join}). 
Clearly if all the $X_i(K)$'s are the same variety, then the previous definition coincides with this one.

In this setting, the space $U_{K,a}$ of (\ref{UKa}) can be generalized to be the set of all points of $\PP^r(K)$ with
rank $a$ with respect to the sequence $X_i$, $i\ge 1$.

In this more general setting, the definitions of $K$-dense and typical rank, the  statements and the proof of Theorem \ref{b2} work verbatim, except that  we
require that each $X_i(K)$ is dense in $X(\Rr)$ in the euclidean topology.

The more general set-up covers the following cases.
\begin{example}
The space $\PP^r$ is the Segre embedding of the multiprojective space $\PP^{n_1}\times \cdots \times \PP^{n_k}$, $r+1= \Pi_{i=1}^k(n_i+1)$, and each $X_i$ is a closed subvariety of $\PP^{n_1}\times \cdots \times \PP^{n_k}$. For instance, each $X_i$ may be a smaller multiprojective space (depending
on less multihomogeneous variables). If $X_i =\PP^{n_1}\times \cdots \times \PP^{n_k}$ for sufficiently many indices, any rank is achieved by a set $I$ such that $X_i =\PP^{n_1}\times \cdots \times \PP^{n_k}$ for
all $i$, but even in this case there may be cheaper sets $J$ (i.e. with $\sharp (J) =\sharp (I)$,  but $X_i\subsetneq \PP^{n_1}\times \cdots \times \PP^{n_k}$ for some $i\in J$).
\end{example}

\begin{example}\label{ooo1}
Instead of the Segre embedding of the multiprojective space $\PP^{n_1}\times \cdots \times \PP^{n_k}$ we may take the Segre-Veronese embedding of multidegree
$(d_1,\dots ,d_k)$ of $\PP^{n_1}\times \cdots \PP^{n_k}$; here $r+1 =\prod _{i=1}^{k} \binom{n_i+d_i}{n_i}$. \end{example}

\smallskip
}

We introduce the notion of \emph{admissible rank} $r_{X,\Rr}(P)$  of a point $P$ (Definition \ref{admissible}) to be  the minimal cardinality of a set $S\subset X(\Cc)$ that is stable under the conjugation action and
such that $P\in \langle S\rangle$. 
Any such a set can be labelled by keeping track of the number of the real points. 
Clearly if $S$ evinces the rank of $P$ than the label of $S$ will be also \emph{a label for $P$}.

One can ask various questions regarding those notions of admissible rank and label of a decomposition of a point.

First of all when a point has a unique decomposition, then clearly the label is uniquely associated to it. In this case it is even possible to prove  that if the generic element is identifiable, then the set of all real points with that rank is dense for the euclidean topology  (see Theorem \ref{i0}).

A ``~partial particular case~" of this situation is the rational normal curve. When the curve has odd degree, then the generic homogeneous bivariate polynomial of that degree is identifiable, so in this case we can apply the result just described. But if the rational normal curve is of even degree, then Theorem \ref{i0} does not apply. Anyway we can completely describe the situation for rational normal curves with the following theorem.

\begin{theorem}\label{oo1.1}
Let $X\subset \PP^d$, $d\ge 2$, be a rational normal curve defined over $\Rr$. Then there is a non-empty open subset $\Uu \subset \PP^d(\Rr )$ such that
$\PP^d(\Rr )\setminus \Uu$ has euclidean dimension $< d$ and each $q\in \Uu$ has admissible rank $\lceil (d+1)/2\rceil$.
\end{theorem}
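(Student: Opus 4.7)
The plan is to translate the problem into Sylvester's classical description of binary forms and their apolar ideals, and then split according to the parity of $d$. Write $n:=\lceil (d+1)/2\rceil$, so $d=2n-1$ when $d$ is odd and $d=2n-2$ when $d$ is even. Fixing an identification $\PP^d=\PP(S^d W^*)$ with $W$ a $2$-dimensional $\Rr$-vector space, the rational normal curve $X$ becomes the image of the $d$-th Veronese embedding of $\PP(W)$, and a point $q\in \PP^d(\Rr)$ corresponds to a real binary form $f_q$ of degree $d$. The key observation underlying the whole argument is that for any squarefree $g$ of degree $m$ in the apolar ideal $f_q^\perp$ that is defined over $\Rr$, the $m$ roots of $g$ in $\PP^1(\Cc)$ come in complex conjugate pairs and so produce, via Sylvester's apolarity, an admissible decomposition of $q$ of length $m$.

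The lower bound is immediate: the admissible rank of $q$ is bounded below by the $X(\Cc)$-rank, and by the classical computation of the generic complex rank of binary forms of degree $d$, the latter equals $n$ on a dense Zariski open subset of $\PP^d$ whose complement is cut out over $\Rr$ by vanishing of suitable maximal minors of a catalecticant matrix. For the upper bound in the odd case $d=2n-1$, Sylvester's theorem gives $\dim_{\Cc} (f_q^\perp)_n=1$ for generic $q$, so $(f_q^\perp)_n$ is spanned by a form $g$ automatically defined over $\Rr$; squarefreeness of $g$ is itself a Zariski open condition equivalent to the rank of $f_q$ being exactly $n$. Hence for $q$ outside a proper real algebraic subvariety, the roots of $g$ in $\PP^1(\Cc)$ give an admissible decomposition of length $n$, so the admissible rank is $\le n$. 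Alternatively this odd case follows directly from Theorem \ref{i0}, since a generic binary form of odd degree is identifiable.

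For the even case $d=2n-2$, one has $\dim_\Cc (f_q^\perp)_n=2$ for $q$ generic, producing a real pencil $\Pi\subset \PP(S^n W^*)$ of apolar forms. Provided $\Pi$ is not entirely contained in the discriminant hypersurface $\Delta$ of degree-$n$ binary forms (which has degree $2(n-1)$), the intersection $\Pi\cap \Delta$ is a finite subscheme, and since $\Pi(\Rr)\cong \PP^1(\Rr)$ is infinite one may select $g\in \Pi(\Rr)\setminus \Delta$; the $n$ roots of $g$ then form a conjugation-stable subset of $\PP^1(\Cc)$ yielding an admissible decomposition of $q$ of length $n$. The hard part is to verify that for generic real $q$ the pencil $\Pi$ is not contained in $\Delta$: this follows because the generic $f_q$ has complex rank exactly $n$, which by Sylvester forces at least one squarefree form to exist in $(f_q^\perp)_n$, and ``the pencil is contained in $\Delta$'' is a Zariski closed condition on the coefficients of $f_q$. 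Collecting together all the exceptional loci in both parities (degenerate catalecticant rank, non-squarefree apolar generator, pencil contained in the discriminant), one obtains a single proper Zariski closed subvariety $W\subsetneq \PP^d$ defined over $\Rr$. Taking $\Uu:=\PP^d(\Rr)\setminus W(\Rr)$ gives a euclidean open set whose complement is a proper real algebraic subvariety, hence of real dimension $<d$, concluding the argument.
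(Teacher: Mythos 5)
Your proof is correct, and in the even-degree case it takes a genuinely different route from the paper's. For odd $d=2n-1$ the two arguments essentially coincide: both come down to Sylvester's theorem that a general binary form of odd degree has a unique length-$n$ decomposition (the paper phrases this as generic identifiability of $Sec_n$ and invokes Theorem~\ref{i0}; your version via the one-dimensional, hence real, kernel $(f_q^\perp)_n$ with squarefree generator is the same fact in apolar clothing). For even $d=2n-2$ the paper argues geometrically: it fixes a real point $p\in X(\Rr)$, projects from $p$ onto a rational normal curve of odd degree $d-1$, applies the odd case there to obtain a unique, hence conjugation-stable, decomposition avoiding the point coming from the tangent direction at $p$, and lifts it back adjoining $p$; this has the side benefit of producing a decomposition containing a prescribed real point of $X$. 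You instead stay entirely inside apolarity: the conjugation-stable pencil $\Pi=(f_q^\perp)_n$ is the complexification of a real pencil, it meets the discriminant $\Delta$ (of degree $2(n-1)$) in a finite scheme once $\Pi\not\subset\Delta$, and since $\Pi(\Rr)\cong\PP^1(\Rr)$ is infinite a real squarefree apolar form exists, whose roots give the admissible decomposition; your justification that $\Pi\not\subset\Delta$ generically (the generic complex rank $n$ forces a squarefree element of $(f_q^\perp)_n$, and containment in $\Delta$ is a closed condition on $f_q$) is sound, as is the lower bound via $Sec_{n-1}(X)$ being a proper subvariety. Your route is arguably cleaner and more self-contained, avoiding the projection-and-lift step; the paper's route gives slightly more (control of the label through the real point $p$). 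One shared caveat, not a defect relative to the paper: both arguments implicitly assume $X$ is the split real form of the rational normal curve --- you through the identification $\PP^d=\PP(S^dW^\ast)$ with $W$ a real $2$-plane, the paper through the choice of $p\in X(\Rr)$ --- which is the intended reading of the statement.
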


The behaviur of rational normal curves is peculiar. In fact it is not always true that when we don't have generic identifiability  one can  find a label for the generic element (remark that the identifiability of the generic element is quite rare).  In  Secion \ref{ex2} we show an example by using an elliptic curve: we can explicitly build an euclidean neighborhood of a generic point in $\PP^3(\Rr)$ where no points in that neighborhood have a label with respect to the Variety of Sum of Powers $VSP(P)$ (see Definition \ref{vsp}). However in this case $VSP(P)$   is finite
and we do not have positive dimensional examples that could be very interesting if they exist. 

The last question that we like to address is what happens when the complex rank is smaller than the admissible rank. For a given $P\in \PP^r(\Rr )$ consider the set $\Ss (P,X,\Cc )$ of all $S\subset X(\Cc )$ evincing $r_{X(\Cc )}({P})$. First of all it is a constructible
subset of $\PP^r(\Cc )$ invariant by the conjugation action, hence it is defined over $\Rr$ and $S\in \Ss (P,X,\Cc )(\Rr )$ if and only if $S$ is fixed by the conjugation action, i.e. if and only if $S$ has a label $(s,a)$
for some $a$ with $0\le a\le \left\lfloor \frac{r_{X(\Cc )}({P})}{2}\right\rfloor$. Therefore $P$ has admissible rank $r_{X(\Cc )}({P})$ if and only $\Ss (P,X,\Cc )(\Rr ) \ne \emptyset$, not even for $P$ outside a subset of $\PP^r(\Rr )$ with
euclidean dimension $<r$. Therefore if $r_{X(\Cc )}({P})<r_{X,\Rr}(P)$ then $\Ss (P,X,\Cc )(\Rr ) = \emptyset$. In Example \ref{iii1} we show this behavior by constructing a very special homogeneous polynomial  in $n\ge 2$ variables and even degree $d\ge 6$  such that $r_{X(\Cc )}({P})= 3d/2<r_{X,\Rr}({P})$, $\sharp (\Ss (P,X,\Cc )) =2$ and $\Ss (P,X,\Cc )(\Rr )=\emptyset$.

\bigskip

Acknowledgments: We want to thank Giorgio Ottaviani for very helpful and constructive remarks.

\section{$K$-dense and $K$-typical ranks}\label{first}
 Let $K$ be any field with characteristic zero and let $\overline{K}$ denote its algebraic closure. Let $X$ be a geometrically integral projective variety defined over $K$ such
that $X(K)$ is Zariski dense in $\overline{X}:=X(\overline{K})$. We fix an inclusion $X\subset \PP^r$ defined over $K$ and such that $X$ spans $\PP^r$, i.e.
no hyperplane defined over $\overline{K}$ contains $X(\overline{K})$. Since $X(K)$ is assumed to be Zariski dense in $\overline{X}$, the non-degeneracy
of $X$ is equivalent to assuming that $X(K)$ spans $\PP^r(K)$ over $K$. 
\\
Now, $\PP^r(\overline{K})$ and $\PP^r(K)$ have their own Zariski topology and the Zariski topology of $\PP^r(K)$ is the restriction of the one on $\PP^r(\overline{K})$. 
\begin{definition}[$\overline{X}$-rank and $X(K)$-rank]
For
each point $P\in \PP^r(\overline{K})$ (resp. $P\in \PP^r(K)$) the \emph{$\overline{X}$-rank} $r_{\overline{X}}({P})$ (resp. the \emph{$X(K)$-rank} $r_{X(K)}({P})$) of $P$ is the minimal cardinality of a subset $S\subset \overline{X}$ (resp. $S\subset X(K)$)
such that $P\in \langle S\rangle$, where $\langle \  \rangle$ denote the linear span. We say that $S$ \emph{evinces} $r_{\overline{X}}({P})$.
\end{definition}
\begin{definition}\label{dense}
For each integer $a>0$ let
\begin{equation}\label{UKa}U_{K,a}:= \left\{P\in \PP^r(K)\; | \; r_{X(K)}(P)=a\right\}\end{equation}
be the subset in $\PP^r(K)$ of the points of fixed $\overline{X}$-rank equal to $a$ as in (\ref{UKa}).
We say
that $a$ is a \emph{$K$-dense rank} if the set $U_{K,a}$ is Zariski dense in $\PP^r(\overline{K})$.
\end{definition}
 Since $\PP^r(K)$ is Zariski dense in $\PP^r(\overline{K})$, then $U_{K,a}$ is a dense
subset of $\PP^r(K)$. 
\begin{definition}[Generic $X(\overline{K})$-rank]  The \emph{generic $X(\overline{K})$-rank} of $\PP^r(\overline{K})$ is the  $\overline{X}$-rank of the generic element of $\PP^r(\overline{K})$.
\end{definition}
The minimal $K$-dense rank is just the generic $X(\overline{K})$-rank of $\PP^r(\overline{K})$ (Lemma \ref{c1}).
\begin{definition}\label{real} A field $K$ is \emph{real} if  $x_1, \ldots , x_n\in K$  are such that $\sum_{i=1}^n x_i^2=0$ then $x_i=0$ for all $i=1, \ldots , n$.
\end{definition}
We recall that a field  $K$ admits an ordering if and only if $-1$ is not a sum of squares (\cite[Theorem 1.1.8]{bcr}). It is possible to prove (cfr \cite[Chapter 4]{bcr}) that when $K$ admits an ordering, then the filed $K$ is real.

\smallskip 

In the real field $\mathbb{R}$ the notion of \emph{typical rank} has been introduced by various authors (see e.g. \cite{bbo}, \cite{b}, \cite{bs}, \cite{bt}, \cite{co}): An integer $r$ is said to be a \emph{$X$-typical rank} if the set $U_{\mathbb{R},r}$ contains a non-empty subset for the euclidean topology of $\PP^r(\mathbb{R})$.

We can also introduce this notion of typical rank into our setting, but we have to assume that $K$ is real closed.

\begin{definition}\label{closed}
A real field is \emph{closed} if it does not have trivial real algebraic extensions.
\end{definition}
We recall that for any ordering $\le $ of $K$, there is a unique inclusion $K\to \Rr$
of ordered field with $\Rr$ real closed (\cite[Theorem 1.3.2]{bcr}).

 If $K$ is real closed, then the sets $X(K)$ and $\PP^r(K)$ have the euclidean topology in the sense of \cite[page 26]{bcr} where the euclidean topology of $K^n$ comes from the ordering structure of $K$, i.e. the \emph{euclidian topology} on $K^n$ is the topology for which open balls form a basis of open subsets (cft. \cite[Definition 2.19]{bcr}).
 
\begin{definition}[Typical $X(K)$-rank]\label{typical} We say that
$a$ is a \emph{typical $X(K)$-rank} if $U_{K,a}$ contains a non-empty subset for the euclidean topology of $\PP^r(K)$.
\end{definition} 
As in the case $K =\mathbb {R}$ the minimal typical rank
is the generic rank (\cite[Theorem 2]{bt}). The set of all typical ranks has no gaps, i.e. if $a$ and $b\ge a+2$ are typical ranks, then $c$ is typical if $a<c<b$ (\cite[Theorem 2.2]{bbo}). 

\begin{notation}
If $K$ is contained in a field $F$ and $S\subseteq \PP^r(K)$, let $\langle S\rangle _F$ denote the linear span of $S$ in $\PP^r(F)$.
\end{notation}

\begin{lemma}\label{c1}
Each set $U_{\overline{K},a}$ is constructible. If $K$ is real closed, then each set $U_{K,a}$ is semialgebraic.
\end{lemma}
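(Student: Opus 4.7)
The plan is to realize each $U_{\overline{K},a}$ (resp.\ each $U_{K,a}$) as the set-theoretic difference of two projections of an explicit incidence variety, and then to quote Chevalley's theorem (resp.\ Tarski--Seidenberg) to conclude.

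For every integer $a\ge 1$ I would introduce the incidence variety
\[
I_a := \{((P_1,\dots ,P_a),Q)\in X^a\times \PP^r \mid Q\in \langle P_1,\dots ,P_a\rangle\}.
\]
The condition $Q\in \langle P_1,\dots ,P_a\rangle$ is equivalent to the vanishing of all $(a+1)\times (a+1)$ minors of the $(a+1)\times (r+1)$ matrix whose rows are homogeneous coordinate lifts of $P_1,\dots ,P_a,Q$; since each such minor is multihomogeneous in the $a+1$ groups of coordinates, these equations define a closed subvariety $I_a\subset X^a\times \PP^r$ defined over $K$. Let $\pi\colon X^a\times \PP^r\to \PP^r$ be the second projection, and set
\[
V_a := \pi(I_a(\overline{K})),\qquad V_0:=\emptyset .
\]
By construction $V_a$ is exactly the locus of points of $\overline{X}$-rank $\le a$. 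Chevalley's theorem, applied to the morphism $\pi|_{I_a}$ between varieties over $\overline{K}$, says that $V_a$ is constructible in $\PP^r(\overline{K})$. Hence
\[
U_{\overline{K},a} = V_a\setminus V_{a-1}
\]
is constructible, which proves the first assertion.

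Now assume $K$ is real closed. The same equations cut out a semialgebraic subset $I_a(K)\subset X(K)^a\times \PP^r(K)$: indeed $X(K)$ is semialgebraic because it is the zero set in $\PP^r(K)$ of finitely many polynomials with coefficients in $K$, and the minor conditions are polynomial. The Tarski--Seidenberg theorem then ensures that the projection
\[
W_a := \pi(I_a(K))\subset \PP^r(K)
\]
is semialgebraic. Since $W_a$ coincides with the set of points of $X(K)$-rank at most $a$, the difference $U_{K,a}=W_a\setminus W_{a-1}$ is semialgebraic, completing the second assertion.

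The only genuine technical point to check is the passage between the rank condition and the polynomial (minor-vanishing) conditions on homogeneous coordinates, together with the observation that the condition is multihomogeneous and hence well-defined on the product of projective spaces. Once this is in place, the statement is a direct application of Chevalley's theorem in the algebraically closed case and of Tarski--Seidenberg in the real closed case; I do not expect any serious obstacle beyond this bookkeeping.
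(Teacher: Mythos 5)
Your overall strategy --- an incidence correspondence, Chevalley's theorem over $\overline{K}$, Tarski--Seidenberg over a real closed $K$, and writing the rank-$a$ locus as a difference of the ``rank $\le a$'' and ``rank $\le a-1$'' loci --- is exactly the paper's strategy. But there is a genuine gap in the one step you yourself single out as the technical point: the condition $Q\in\langle P_1,\dots ,P_a\rangle$ is \emph{not} equivalent to the vanishing of all $(a+1)\times (a+1)$ minors of the matrix with rows $P_1,\dots ,P_a,Q$. Those minors cut out the locus where the augmented matrix has rank $\le a$, and this is equivalent to $Q$ lying in the span only when $P_1,\dots ,P_a$ are linearly independent; if they are dependent, the augmented matrix has rank $\le a$ for \emph{every} $Q$. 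Concretely, taking $P_1=\dots =P_a$ shows that for $a\ge 2$ your closed set $I_a$ contains $\{(P,\dots ,P)\}\times \PP^r$, hence $V_a=\PP^r$ for all $a\ge 2$, and your formula $U_{\overline{K},a}=V_a\setminus V_{a-1}$ returns $\emptyset$ for every $a\ge 3$. So, as written, the identification ``$V_a$ is exactly the locus of points of $\overline{X}$-rank $\le a$'' is false and the proof does not go through.

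The repair is exactly what the paper builds in from the start: restrict to the open locus $E\subset X^a$ of linearly independent $a$-tuples (non-vanishing of some $a\times a$ minor of the matrix of the $P_i$ alone), on which the minor condition does characterize membership in the span; equivalently, stratify $X^a\times \PP^r$ by the rank of the matrix of the $P_i$. Since any point of rank $\le a$ lies in the span of a linearly \emph{independent} subset of $X$ of cardinality $\le a$ (replace an evincing set by a maximal independent subset), the union over $b\le a$ of the images of these locally closed incidence sets is still the full rank-$\le a$ locus, and each piece is constructible, resp.\ semialgebraic, by Chevalley, resp.\ \cite[Proposition 2.2.3]{bcr}. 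The paper packages this by sending $E$ to the Grassmannian $G(a-1,r)$ and pulling back the universal incidence correspondence, then removing the lower-rank locus inductively; once the independence condition is imposed, that is only a cosmetic difference from your set-up.
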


\begin{proof}
Since $U_{\overline{K},1} =X(\overline{K})$ is Zariski closed, we may assume $a>1$ and that $G:= \cup _{c<a} U_{\overline{K},c}$ is constructible.
\\ 
There is an obvious morphism from  the set  $E\subset X(\overline{K})^a$ of all $a$-uple of linearly independent
points 
to the Grassmannian $G(a-1,r)(\overline{K})$ of all
$(a-1)$-dimensional $\overline{K}$-linear subspaces of $\PP^r (\overline{K})$:
$$\phi:E \to G(a-1,r)(\overline{K}).$$
As usual let $I:= \{(x,N)\in \PP^r(\overline{K})\times G(a-1,r)(\overline{K})\, | \,  x\in N\}$ be the incidence
correspondence and let $\pi _1: I\to \PP^r(\overline{K})$
and $\pi _2: I\to  G(a-1,r)(\overline{K})$ denote the restrictions to $I$ of the two projections.\\
Now  $U(\overline{K},a)$ is the intersection with $\PP^r(\overline{K})\setminus G$ of $\pi _1(\pi _2^{-1}(\phi (E))$. Obviously
the counterimage by a continuous map for the Zariski topology of a constructible set is constructible.
Use that the image of a constructible set is constructible by a theorem of Chevalley \cite[Exercise II.3.19]{h}. If $K$ is real closed, it is sufficient to quote \cite[Proposition 2.2.3]{bcr}, instead of Chevalley's theorem.
\end{proof}

\begin{proposition}\label{b1}
If $K$ is a real closed field, then an integer is $K$-typical if and only if it is $K$-dense.
\end{proposition}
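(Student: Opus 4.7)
The plan is to establish the two implications separately, leveraging the semialgebraicity of $U_{K,a}$ provided by Lemma \ref{c1} and standard facts about semialgebraic dimension from \cite{bcr}.

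For the direction ``$K$-typical $\Rightarrow$ $K$-dense'', I would assume that $U_{K,a}$ contains a non-empty euclidean open subset $U \subseteq \PP^r(K)$ and aim to show that the Zariski closure $W$ of $U_{K,a}$ in $\PP^r(\overline{K})$, where $\overline{K}=K(i)$, equals all of $\PP^r(\overline{K})$. Supposing for contradiction that $W$ were proper, I would pick a homogeneous polynomial $g\in \overline{K}[x_0,\dots,x_r]$ cutting out $W$ but not vanishing identically on $\PP^r(\overline{K})$, split it as $g=g_1+ig_2$ with $g_1,g_2\in K[x_0,\dots,x_r]$, and observe that both $g_1$ and $g_2$ vanish on $U_{K,a}\supseteq U$. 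Since $K$ is real closed (hence an infinite ordered field), the standard identity principle for polynomials forces $g_1\equiv g_2\equiv 0$, and hence $g\equiv 0$, contradicting the properness of $W$.

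For the direction ``$K$-dense $\Rightarrow$ $K$-typical'', Lemma \ref{c1} gives that $U_{K,a}$ is semialgebraic. The key tool is the standard equality, valid over any real closed field, between the semialgebraic dimension of a semialgebraic subset $S \subseteq \PP^r(K)$ and the dimension of its Zariski closure in $\PP^r(\overline{K})$ (see \cite[Chapter 2]{bcr}). If $U_{K,a}$ is $K$-dense, this Zariski closure is $\PP^r(\overline{K})$, so it has dimension $r$, and therefore $U_{K,a}$ has semialgebraic dimension $r$. Any semialgebraic subset of $\PP^r(K)$ of maximal dimension $r$ contains a non-empty euclidean open subset, by cylindrical algebraic decomposition (a top-dimensional cell is open in the euclidean topology). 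Hence $a$ is $K$-typical.

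I do not expect any substantive obstacle: both ingredients are standard real-algebraic-geometry facts. The only bit of care needed is to transfer the identity principle and the dimension equality from the affine setting in which they are usually stated in \cite{bcr} to the projective space $\PP^r(K)$, which is routine by covering with standard affine charts and observing that a non-empty euclidean open set in $\PP^r(K)$ meets at least one such chart in a non-empty euclidean open set.
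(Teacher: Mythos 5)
Your proof is correct and follows essentially the same route as the paper: both rest on the semialgebraicity of $U_{K,a}$ from Lemma \ref{c1}, the equivalence (for semialgebraic sets over a real closed field) between having full dimension, being Zariski dense, and containing a non-empty euclidean open set, and the Zariski density of $\PP^r(K)$ in $\PP^r(\overline{K})$. The only cosmetic difference is that you handle the ``typical $\Rightarrow$ dense'' direction by an explicit identity-principle argument rather than by citing the same dimension equivalence, which is a harmless variation.
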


\begin{proof}
Since $U_{K,a}$ is semialgebraic (Lemma \ref{c1}), it
is Zariski dense in $\PP^r(K)$ if and only if it has dimension $r$ (or, equivalently, if and only if it contains a non-empty open subset in the euclidean topology). Since $K$ is infinite, $\PP^r(K)$
is Zariski dense in $\PP^r(\overline{K})$. Hence
$U_{K,a}$ is Zariski dense in $\PP^r(\overline{K})$ if and only if it is Zariski dense in $\PP^r(K)$. Thus $a$ is $K$-dense if and only if it is $K$-typical.
\end{proof}

\begin{remark}\label{b2.1}
Assume $K$ real closed and let $L\supset K$ be any real closed field containing $K$. By Proposition \ref{b1} the $K$-typical ranks of $X(K)\subset \PP^r(K)$ and of $X(L)\subset \PP^r(L)$
are the same (this also may be proved directly from the Tarski-Seidenberg principle). In particular this means that  the typical ranks of real tensors and the typical rank of real homogeneous polynomials are realized over the real closure of $\mathbb {Q}$. Moreover, if $a$ is typical, then $U_{K,a}$ is dense in the interior of $U(L,a)$ for the euclidean topology. 
\end{remark}
A very important result (see \cite[Chapter 4]{bcr}) is that for any real field $K$, there exists a unique real closed field $\Rr$ such that $K\subset \Rr$.

\begin{theorem}\label{b2}
Assume that $K$ admits an ordering, $\le $, and let $\Rr$ be the real closure of the pair $(K,\le )$. Assume that $X(K)$ is dense in $X(\Rr )$ in the euclidean topology. Then every $\Rr$-typical rank of $X(\Rr)$ is a $K$-dense rank for $X(K)$.
\end{theorem}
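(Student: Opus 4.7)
The plan is to combine two ingredients: the very definition of $\Rr$-typical rank, which says that an $\Rr$-typical rank $a$ is witnessed by a non-empty euclidean open subset of $\PP^r(\Rr )$; and the observation that a subset of $\PP^r(K)$ which is euclidean-dense in such an open set is automatically Zariski dense in $\PP^r(\overline{K})=\PP^r(\Cc )$. The first ingredient allows me to manufacture points of $\PP^r(K)$ of $X(K)$-rank exactly $a$, while the second converts their abundance into the required Zariski density.

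Fix an $\Rr$-typical rank $a$. By Definition \ref{typical}, $U_{\Rr ,a}$ contains a non-empty euclidean open subset $W\subset \PP^r(\Rr )$. Choose any $P\in W$ and any decomposition $P=\sum_{i=1}^{a}\lambda _iP_i$ in affine coordinates, with $P_i\in X(\Rr )$ and $\lambda _i\in \Rr $. Using the hypothesis that $X(K)$ is euclidean-dense in $X(\Rr )$, together with the standard fact that $K$ is euclidean-dense in its real closure $\Rr $, for every $\varepsilon >0$ I can pick $Q_i\in X(K)$ and $\mu _i\in K$ with $\|Q_i-P_i\|<\varepsilon $ and $|\mu _i-\lambda _i|<\varepsilon $. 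The point $P':=\sum_{i=1}^{a}\mu _iQ_i\in \PP^r(K)$ depends continuously on these data, so for $\varepsilon $ small enough $P'\in W$. Then $r_{X(\Rr )}(P')=a$, while the displayed $K$-decomposition gives $r_{X(K)}(P')\le a$; combined with the trivial inequality $r_{X(K)}(P')\ge r_{X(\Rr )}(P')$ this forces $P'\in U_{K,a}$. Since $P$ ranged over $W$, the set $U_{K,a}\cap W$ is euclidean-dense in $W$.

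To upgrade this to Zariski density in $\PP^r(\overline{K})=\PP^r(\Cc )$, let $f\in \Cc [x_0,\dots ,x_r]$ be a non-zero homogeneous polynomial and write $f=f_1+if_2$ with $f_1,f_2\in \Rr [x_0,\dots ,x_r]$. Since $U_{K,a}\subset \PP^r(\Rr )$, vanishing of $f$ on $U_{K,a}$ forces both $f_1$ and $f_2$ to vanish on $U_{K,a}$; then by continuity and the density just proved, they vanish on all of $W$. A non-zero polynomial with coefficients in the real closed field $\Rr $ cannot vanish on a non-empty euclidean open subset of $\PP^r(\Rr )$ (restrict to affine lines through a point of $W$ and use that a one-variable polynomial over the infinite field $\Rr $ has only finitely many roots). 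Hence $f_1=f_2=0$ and $f=0$, proving that $U_{K,a}$ is not contained in any proper Zariski closed subset of $\PP^r(\overline{K})$.

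The main technical point is ensuring that the approximant $P'$ has rank exactly $a$ rather than some smaller value; this is precisely what the euclidean openness of $W\subset U_{\Rr ,a}$ guarantees, since small perturbations of $P$ inside $W$ keep the $r_{X(\Rr )}$-rank frozen at $a$. A secondary subtlety, the euclidean density of $K$ in $\Rr $ needed to approximate the coefficients $\lambda _i$, is a standard property of the real closure of an ordered field.
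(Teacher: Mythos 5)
Your argument is essentially the paper's: both take a point of the euclidean-open set witnessing the typical rank $a$, perturb its rank-$a$ decomposition into one with points of $X(K)$ and coefficients in $K$ (using the density hypothesis together with the density of $K$ in $\Rr$, which the paper also relies on when it passes from the $\Rr$-linear spans to the $K$-linear spans of tuples in $E(K)$), and observe that openness of the witness set keeps the rank pinned at $a$, so that $U_{K,a}$ meets it in a euclidean-dense, hence Zariski-dense, subset. Your explicit final step upgrading euclidean density in an open subset of $\PP^r(\Rr)$ to Zariski density in $\PP^r(\overline{K})$ is a correct addition that the paper leaves implicit.
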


\begin{proof}
Let $U\subset \PP^r(\Rr)$ be an open subset for the euclidean topology formed by points
with fixed  $X(\Rr )$-rank equal to $a$. Let $E\subset X(\Rr )^a$ be the set of all linearly independent $a$-ples
$(Q_1,\dots ,Q_a)$ of distinct points. For each $(Q_1,\dots ,Q_a)\in E$, we have an $(a-1)$-dimensional $\Rr$-linear
space $\langle \{Q_1,\dots ,Q_a\}\rangle _{\Rr}$. By assumption there is an open subset $F\subset E$ in the euclidean topology
such that the union $\Gamma$ of all $\langle \{Q_1,\dots ,Q_a\}\rangle _{\Rr}$ contains all points of $U$. Since $X(K)$ is dense in $X(\Rr )$ in the euclidean topology, the set $E(K)$ is
dense in $E$ in the euclidean topology. Hence the subset $\Gamma '$ of $\Gamma$ formed by the $\Rr$-linear spans of elements of $E(K)$
is dense in $\Gamma$ and hence its closure in the euclidean topology contains $U$. Since the closure in the euclidean topology of the set $\Gamma '' \subset \Gamma'$ formed
by the $K$-linear spans of elements of $E(K)$ contains $\Gamma '$, every $\Rr$-typical rank is a $K$-dense rank.
\end{proof}

\begin{remark}
The condition that $X(K)$ is Zariski dense in $X(\overline{K})$ is very restrictive if $K$ is a number field and $X$ has general type. For example if $X$ is a curve of geometric genus  $\ge 2$, then it is never satisfied.
But it is not restrictive in the two more important cases: tensors (where $X$ is a product of projective spaces
with the Segre embedding)
and degree $d$-homogeneous polynomials (where $X =\PP ^n$, and the inclusion $X\subset \PP^{\binom{n+d}{n}-1}$ is the order $d$ Veronese embedding).
It applies also to Segre-Veronese embeddings of multiprojective spaces (the so-called Segre-Veronese varieties). 
\\
If $K$ has an ordering $\le $
and $\Rr$ is the real closure of $(K,\le )$, then $X(K)$ is dense in $X(\Rr )$ for the euclidean topology, because $K^n$ is dense in $\Rr^n$ for the euclidean topology.
The set of typical $X$-ranks may be very large (\cite{b}, \cite[Theorem 1.7]{bs} and \cite[Theorem 2.2]{bbo}).
\end{remark}

\section{Join, set-theoretic $K$-join and $K$-join}\label{second}

We describe now the situation of more than only one variety. First of all we need to distinguish if the join of two ore more varieties is defined over $\PP^r(\overline{K})$ or over $\PP^r(K)$. This will allow to introduce the notion of  rank with respect to join varieties and the ``~label~"  associated to a decomposition of an element. We will label a decomposition of a point with the number of its $\mathbb{P}^r(K)$ elements.
\begin{definition}[Join] Let $X, Y\subset \PP^r(\overline{K})$ be integral varieties over $\overline{K}$. 
We define the \emph{join $[X;Y]$} of $X$ and $Y$ to be the closure in $\PP^r(\overline{K})$ of the union of all lines spanned by a point of $X(\overline{K})$
and a different point of $Y(\overline{K})$ (if $X$ and $Y$ are the same point $Q$ we set  $[X;Y]:= \{Q\}$).
\end{definition}
The set $[X;Y]$ is an integral variety of dimension at most $\min \{r,\dim (X)+\dim (Y)+1\}$.
\begin{definition}
 If we have $s \ge 3$ integral varieties $X_i\subset \PP^r(\overline{K})$, $1\le i\le s$, we define inductively their join $[X_1;\cdots ;X_s]$ by the formula 
$$[X_1;\dots ;X_s]: = [[X_1;\dots ;X_{s-1}];X_s].$$ 
\end{definition}
The join is symmetric in the $X_i$'s. If $X$ and $Y$ are defined over $K$, then $[X;Y]$ is defined over $K$ and $[X;Y](K)$ contains the closure (in the Zariski topology) of $[X;Y]\cap \PP^r(K)$,
but it is usually larger (even if $K$ is real closed and $X(K)$ and $Y(K)$ are dense in $X(\overline{K})$ and $Y(\overline{K}))$.
\begin{definition}[Set theoretic $K$-join and $K$-join]
Assume that $X_i(K)$ is Zariski dense in $X_i(\overline{K})$ for all $i$. The \emph{set-theoretic $K$-join} $((X_1;\dots ;X_s))_K \subseteq \PP^r(K)$ of $X_1,\dots ,X_s$ is the union
of $K$-linear subspaces spanned by points $Q_1,\dots ,Q_s$ with $Q_i\in X_i(K)$ for all $i$.

The \emph{$K$-join} $[X_1;\dots ;X_s]_K$ is the Zariski closure in $\PP^r(K)$ of $((X_1;\dots ;X_s))_K$.
\end{definition}

Take geometrically integral projective varieties $X_i$, $i\ge 1$, defined over $K$ and equipped with an embedding $X_i\subset \PP^r$ defined over $K$. We allow the case $X_i=X_j$ for some $i\ne j$ (in the case $X_i=X$ for all $i$'s we would just get the set-up of Proposition \ref{b1} and Theorem \ref{b2}).
We assume that each $X_i(K)$ is Zariski dense in $X_i(\overline{K})$ and that $((X_1;\dots ;X_h))_K= \PP^r(K)$ for some $h$.
The latter condition implies that $[X_1;\dots ;X_h] =\PP^r(\overline{K})$.

\begin{definition}[Rank and label with respect to join]\label{rank:join}
Fix a point $Q\in \PP^r(K)$. The \emph{rank $r_{X_i,i\ge 1}(Q)$} is the minimal cardinality of a finite set $I\subset \NN\setminus \{0\}$ such that there is $Q_i\in X_i(K)\subset \mathbb{P}^r(K)$, $i\in I$,
with $Q\in \langle \{Q_i\}_{i\in I}\rangle$. Any $I$ as above will be called a {\emph{label}} of $Q$.
\end{definition}
In this new setup we can re-define $U_{K,a}$ of (\ref{UKa}) more generally.
\begin{notation} Let $U_{K,a}$ denote the set of all points of $\PP^r(K)$ with fixed 
rank $a$ with respect to the sequence of varieties  $X_i$, $i\ge 1$ as in Definition \ref{rank:join}.
\end{notation}
With this extended notion of $U_{K,a}$ the Definitions \ref{dense} and \ref{typical} of $K$-dense and typical rank respectively can be re-stated here verbatim.

\begin{remark}\label{b3}
The statements and proof of Proposition \ref{b1} and Theorem \ref{b2} work verbatim in this more general setting (for the generalization of Theorem \ref{b2} we
require that each $X_i(K)$ is dense in $X(\Rr)$ in the euclidean topology).
\end{remark}

The more general set-up clearly covers the tensor cases where $X$ is the Segre or the Segre-Veronese variety. 
For example it applies to any $X_i$ being closed subvarieties of $\PP^{n_1}\times \cdots \times \PP^{n_k}$ and then considering the Segre embedding of the multiprojective space $\PP^{n_1}\times \cdots \times \PP^{n_k}$ into $\mathbb{P}^r$ with $r+1= \Pi_{i=1}^k(n_i+1)$.
For instance, each $X_i$ may be a smaller multiprojective space (depending
on less multihomogeneous variables). If $X_i =\PP^{n_1}\times \cdots \times \PP^{n_k}$ for sufficiently many indices, any rank is achieved by a set $I$ such that $X_i =\PP^{n_1}\times \cdots \times \PP^{n_k}$ for
all $i$, but even in this case there may be cheaper sets $J$ (i.e. with $\sharp (J) =\sharp (I)$,  but $X_i\subsetneq \PP^{n_1}\times \cdots \times \PP^{n_k}$ for some $i\in J$).
\\
The case of Segre-Veronese embedding of multidegree
$(d_1,\dots ,d_k)$ of $\PP^{n_1}\times \cdots \PP^{n_k}$ (here $r+1 =\prod _{i=1}^{k} \binom{n_i+d_i}{n_i}$), is completely analogous.

\section{On the ``~generically identifiable~" case}\label{third}

Let $\Rr$ be a real closed field as in Definitions \ref{real} and \ref{closed} and take $\Cc := \Rr (i)$ to be the algebraic closure of $\Rr$ (for these fundamental facts we always refer to \cite{bcr}). Now $X$ is a geometrically connected variety defined over $\Rr$ with a fiexd embedding $X\subset \PP^r$ and we assume that $X(\Cc)$ is non-degenerate,
i.e. $X(\Cc)$ spans $\PP^r(\Cc)$.

\begin{definition}[$X$-rank]\label{label1}
 For each $P\in \PP^r(\Cc)$ the \emph{$X$-rank }of $P$
is a minimal cardinality of a set $S\subset X(\Cc)$ such that $P\in \langle S\rangle$. 
\end{definition}

We want to consider now the ``~conjugation action~" on the elements appearing in a decomposition of $P$.

\begin{notation}\label{not} Let $\sigma : \Cc \to \Cc$ be the field automorphism with $\sigma (x) =x$ for all $x\in \Rr$ and $\sigma (i) =-i$.

\end{notation}
Remark that the map $\sigma$ just introduced acts (not algebraically) on $X(\Cc )$ and $\PP^r(\CC )$
with $\sigma ^2$ as the identity and with $X(\Rr )$ and $\PP^r(\Rr)$ as its fixed point set. 
Note that if $\dim (X)=n$ and $\Rr = \RR$, then $\dim (X(\CC )\setminus X(\RR )) =2n$ and hence, topologically, a pair of complex conjugate
points of $X(\CC)\setminus X(\RR)$ ``~costs~" as two points of $X(\RR)$. It seems to us that the same should be true for
the algorithms used in the case of the tensor decomposition or the decomposition of degree $d$ homogeneous
polynomial as a sum (with signs if $d$ is even) of $d$ powers of linear forms.

\begin{definition}[Admissible rank]\label{admissible}
The {\emph{admissible rank}} $r_{X,\Rr}(P)$ of a real point $P\in \PP^r(\Rr )$ is the minimal cardinality of a possibly complex set $S\subset X(\Cc)$ such that $P\in \langle S\rangle$ and $\sigma (S) =S$. 
\end{definition}

As we did in Definition \ref{rank:join}, we can again keep track of the elements appearing in a decomposition of a point by labelling those that are real.
\begin{definition}[Label of $S$]
In any finite set $S\subset X(\Cc)$ such that $\sigma (S) =S$ there are $a$ pairs of $\sigma$-conjugated points of $X(\Cc )\setminus X(\Rr)$, with $0\le a \le \left\lfloor \frac{\sharp(S)}{2}\right\rfloor$ (the other  $(\sharp(S)-2a)$ points of $S$ are in $X(\Rr)$). 
 
 We say that $(\sharp(S),a)$
is {\emph{the label}} of $S$. If  $S$ evinces the admissible rank of $P$, then we say that $(r_{X,\Rr}({P}),a)$ is {\emph{a label}} of $P$.
\end{definition}

\begin{remark}\label{stup1}
Fix a finite set $S\subset X(\Cc)$. The set $\sigma (S)$ is finite and $\sharp (\sigma (S)) =\sharp (S)$. It is very natural to say that $S$ is defined over $\Rr$ if and only if
$\sigma (S) =S$. Clearly if  $\sigma (S)=S$,  $\sigma$ sends bijectively $S$ onto itself and the label $a$ of Definition \ref{stup1} is the number
of pairs $\{p,\sigma ({p})\}$, $p\in X(\Cc )\setminus X(\Rr )$, contained in $S$, while $\sharp(S)-2a = \sharp (X(\Rr )\cap S)$.
\end{remark}

If $P\in \PP^r(\Rr)$ has a unique decomposition, then we can directly associate a unique label to $P$ itself.

\begin{notation}[Label of $P$]
Fix $P\in \PP^r(\Rr)$ such that there is a unique set $S\subset X(\Cc )$ evincing the $X$-rank, $s$, of $P$. Since $\sigma (P)=P$, the uniqueness
of $S$ implies $\sigma (S) =S$. Hence $S$ has a label $(s,a)$ and we say that \emph{$P$ has label $(s,a)$}. If $X(\Rr )=\emptyset$, then each label is of the form $(2a,a)$. 
\end{notation}
\begin{definition}\label{secant}
For every integer $t\ge 1$ the \emph{$t$-secant variety} $Sec _t(X(\Cc ))$ of $X(\Cc)$
is the closure in $\PP^r(\Cc)$ of the set of all points with $X$-rank $t$.
\end{definition}
The set $Sec _t(X(\Cc))$ is an integral variety defined over $\Rr$ and we
are interested in its real locus $\sigma _t(X(\Cc))\cap \PP^r(\Rr)$. 
\begin{definition}\label{identifiable}
We say that $Sec _t(X(\Cc))$ is \emph{generically identifiable}
if for a general $P\in Sec _t(X(\Cc))$ there is a unique $S\subset X(\Cc )$ such that $\sharp (S)=t$ and $P\in \langle S\rangle$ (this notion has been already widely introduced in literature, see e.g. \cite{bco}, \cite{chio}, \cite{cov}).
\end{definition}
We have $\Cc = \Rr +\Rr i$ and hence we may see $\PP^r(\Cc )$ as an $\Rr$-algebraic variety of dimension $2r$. Hence $\PP^r(\Cc )$ has an euclidean topology
and this topology is inherited by all subsets of $\PP^r(\Cc)$. This topology on $Sec _t(X(\Cc ))$ is just the euclidean topology obtained seeing it as
a real algebraic variety of dimension twice the dimension of $Sec _t(X(\Cc) )$.

\begin{theorem}\label{i0}
Fix an integer $t>0$ and assume that $Sec _s(X(\Cc))$ is generically identifiable. Then the set of all real points $P\in Sec _s(X(\Cc ))\cap \PP^r(\Rr )$ with
one of the labels $(s,a)$, $0\le a \le \lfloor s/2\rfloor$ is dense in the smooth part $Sec _s(X(\Cc ))\cap \PP^r(\Rr )$ for the euclidean topology and its complementary
is contained in a proper closed subset of $Sec _s(X(\Cc ))\cap \PP^r(\Rr )$ for the Zariski topology.
\end{theorem}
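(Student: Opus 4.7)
The plan is to exploit the generic identifiability hypothesis to turn the ``choice of decomposition'' into a regular map, and then use $\sigma$-equivariance to force the unique decomposition of any real point to be $\sigma$-invariant and hence automatically labelled.

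First I would produce a non-empty Zariski-open subset $V\subset Sec_s(X(\Cc))$, defined over $\Rr$, on which the decomposition is unique and hence defines a regular map $P\mapsto S(P)\in\mathrm{Sym}^s(X(\Cc))$. Such a $V$ exists by generic identifiability: take the open locus in $Sec_s(X(\Cc))$ over which the second projection from the incidence correspondence $\{(S,P)\in\mathrm{Sym}^s(X(\Cc))\times\PP^r(\Cc):P\in\langle S\rangle\}$ is one-to-one. Since all the ingredients are defined over $\Rr$ and stable under the $\sigma$-action, so is $V$, and its complement $W:=Sec_s(X(\Cc))\setminus V$ is a proper $\Rr$-Zariski closed subvariety.

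Next I would fix an arbitrary $P\in V\cap\PP^r(\Rr)$ and apply $\sigma$ to the relation $P\in\langle S(P)\rangle$, obtaining $P=\sigma(P)\in\langle\sigma(S(P))\rangle$; uniqueness of the decomposition then forces $\sigma(S(P))=S(P)$. By Remark \ref{stup1}, the $\sigma$-invariant $s$-set $S(P)$ carries a well-defined label $(s,a(P))$ with $0\le a(P)\le\lfloor s/2\rfloor$, and the sandwich inequality $s=r_{X(\Cc)}(P)\le r_{X,\Rr}(P)\le\sharp(S(P))=s$ shows that $S(P)$ evinces the admissible rank of $P$. Thus $(s,a(P))$ is one of the labels of $P$ in the sense of Definition \ref{admissible}.

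The set of real points of $Sec_s(X(\Cc))\cap\PP^r(\Rr)$ without such a label is consequently contained in $W\cap\PP^r(\Rr)$, which is the real locus of a proper Zariski closed subset of $Sec_s(X(\Cc))$; this is precisely the Zariski-closed proper subset required by the second conclusion. For the euclidean density in the smooth part I would invoke the standard real-algebraic fact (see \cite{bcr}) that the smooth locus of $Sec_s(X(\Cc))$ intersected with $\PP^r(\Rr)$ is, wherever non-empty, a real analytic manifold of real dimension equal to $\dim_\Cc Sec_s(X(\Cc))$, and that a proper $\Rr$-Zariski closed subvariety meets such a manifold in a set of strictly smaller real dimension, hence in a euclidean nowhere-dense subset. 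This last semialgebraic dimension step is the only place that requires genuine work; the algebraic content of the theorem is a two-line consequence of uniqueness together with the $\sigma$-equivariance of the incidence correspondence.
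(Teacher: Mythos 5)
Your proposal is correct and follows essentially the same route as the paper: both construct, via the incidence correspondence, a $\sigma$-stable Zariski-open locus of unique decomposability, deduce $\sigma(S(P))=S(P)$ for real $P$ there, and conclude density from the fact that the excluded locus is a proper closed subvariety meeting the real smooth part in strictly smaller dimension. Your explicit sandwich inequality $s=r_{X(\Cc)}(P)\le r_{X,\Rr}(P)\le\sharp(S(P))=s$ (which implicitly uses that one may also shrink $V$ to avoid $Sec_{s-1}$, so that the rank is exactly $s$) is a welcome clarification of a point the paper leaves tacit.
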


\begin{proof}
We may assume $s\ge 2$, because $Sec _1(X(\Cc )) =X(\Cc )$. 
Since $Sec _s(X(\Cc ))$ is generically identifiable, it has the expected dimension $(s+1)\dim(X)-1$. 
\\ 
Let $E$ be the set of all subsets of $X(\Cc)$ formed by $s$ linearly independent points. 
For each $S\in E$, the
map $S\mapsto \langle S\rangle$ defines a morphism $\phi$ from $E$ to the Grassmannian $G(s-1,r)(\Cc)$ of all
$(s-1)$-dimensional $\Cc$-linear subspaces of $\PP^r (\Cc)$. Let $I:= \{(x,N)\in \PP^r\times G(s-1,r)\; | \; x\in N\}$ be the incidence
correspondence and let $\pi _1: I\to \PP^r(\Cc)$
and $\pi _2: I\to  G(s-1,r)(\Cc)$ denote the restriction to $I$ of the two projections. 
The set $U_{\Cc,a}$  of all points with rank exactly equal to $s$ is the intersection with $\{P\in \PP^r(\Cc) \, | \, rk(P)\geq s\}$ with $\pi _1(\pi _2^{-1}(\phi (E))$ and hence it is constructible. Since $X$ is real and the embedding $X\subset \PP^r$ is defined over $\RR$, we have $\sigma (U_{\Cc ,a}) =U_{\CC ,a}$.
By assumption there is a non-empty open subset $V_{\Cc ,s}$ of $U_{\Cc ,s}$ for the Zariski topology such that each $P\in V_{\Cc ,s}$ comes from a unique point
of $\pi_1( \pi_2^{-1}(E))$ and in particular it is associated to a unique $S_P\in E$. Since the embedding is real, each point of $\sigma (V_{\Cc ,s})$ has the same
property. Since $V_{\Cc ,s}$ is Zariski dense in $Sec _s(X(\Cc))$, $V_{\Cc ,s}\cap \PP^r(\Rr)$ is Zariski dense in $Sec _s(\Cc)\cap \PP^r(\Rr)$.
Fix any $P\in V_{\Cc,s}\cap \PP^r(\Rr)$. Since $S_P$ is uniquely determined by $P$ and $\sigma (P)=P$, we have $\sigma (S_P)=S_P$.
Hence $S_P$ has a label. Let $U'_{\Cc ,s}$ be the subset of $U'_{\Cc ,s}$ corresponding only to sets $S\subset X_{\mathrm{reg}}(\Cc )$
and at which the map $\pi_1$ has rank $s(\dim(X)+1)-1$. Note that $U'_{\Cc ,s}$ is contained in the smooth part of $Sec _s(\Cc)$. Set $V'_{\Cc ,s}:= U'_{\Cc,s}\cap V_{\Cc,s}$.
Since $V'_{\Cc ,s}$ is smooth, $V'_{\Cc,s}\cap \PP^r(\Rr)$ is a smooth real algebraic variety. We saw that each point of $V'_{\Cc,s}$ has a label $(s,a)$. $V'_{\Cc ,s}$ is
dense in the smooth part of the real algebraic variety $Sec _s(X(\Cc))(\Rr ) =Sec _s(X(\Cc))\cap \PP^r(\Rr )$ for the euclidean topology and hence
it is Zariski dense in $Sec _s(X(\Cc))$ and $Sec _s(X(\Cc))\cap \PP^r(\Rr )$.
\end{proof}

\begin{remark} When the set $\Ss (P,X,\Cc )$  of all $S\subset X(\Cc )$ evincing $r_{X(\Cc )}({P})$, is finite, in order to have
$\Ss (P,X,\Cc )(\Rr ) \ne \emptyset$ it is sufficient that $\sharp (\Ss (P,X,\Cc )(\Rr ))$ is odd (we use this observation to prove Theorem \ref{oo1.1} for odd $d$).
\end{remark}

There are many uniqueness results for submaximal tensors (see  for example \cite{bco}, \cite{chio}, \cite{cov}) and so Theorem \ref{i0} applies in many cases.

A first interesting case where to study the admissible rank is the one of rational normal curves. If the degree if the curve is odd, then a general point is identifiable so Theorem \ref{i0} assures the existence of a dense set of points with any label $(1+d/2,a)$,
 while if $d$ is even, the euclidean dimension of points with admissible rank $\lceil (d+1)/2\rceil$ has to be studied. This is the purpose  of Theorem \ref{oo1.1}.
We need to recall the following definition.
\begin{definition}\label{vsp}
Fix any $P\in V$ and let $VSP(P)$ denote
the set of all $S\subset X(\Cc )$ such that $P\in \langle S\rangle _\Cc$ and $\sharp (S) =\rho$. 
\end{definition}

\begin{proof}[Proof of Theorem \ref{oo1.1}:] If $d$ is odd, then a general $P\in \PP^d(\Cc )$ has rank $\lfloor (d+1)/2\rfloor$
and $Sec _{\lfloor (d+1)/2\rfloor}(X(\Cc ))$ is generically identifiable (\cite[Theorem 1.40]{ik}). Thus we may apply Theorem \ref{i0} in this case. 

Now assume
that $d$ is even. In this case a general $q\in \PP^d(\Cc )$ has rank $1 + d/2$ and the set $VSP(q)$ has dimension $1$. Fix $p\in X(\Rr)$.

\medskip 

\quad {\emph {Claim :}} There is a non-empty open subset $\Uu$ of $\PP^d(\Rr)$ such that $\PP^d(\Rr )\setminus \Uu$ has euclidean dimension $<d$ and
each $q\in \Uu$ has admissible rank $1+d/2$ with label $(1+d/2,a)$ and $2a \le d/2$, computed by a set $S$ with $\sigma (S) =S$ and $p\in S$.

\medskip

\quad {\emph{ Proof of the Claim:}} If $d=2$ we can simply take as $\Uu$ the subset of $\PP^2(\Rr)\setminus X(\Rr)$ formed by the points that are not on the tangent
line to $X(\Rr )$ at $p$. 

Therefore assume
$d\ge 4$. Let $\ell : \PP^d(\Cc )\setminus \{p\} \to \PP^{d-1}(\Cc)$ denote the linear projection from $p$. Let $Y(\Cc )$ denote the closure
of $\ell (X(\Cc )\setminus \{p\})$. Since $p\in \PP^d(\Rr)$, $\ell$ is defined over $\Rr$ and $Y(\Cc)$ is defined over $\Rr$. By construction the curve $Y(\Cc )$ is a rational normal curve of degree $d-1$
and $Y(\Cc )\setminus \ell (X(\Cc )\setminus \{p\})$ is a unique point, $p'$, corresponding to the tangent line of $X(\Cc )$ at $p$. Since $p\in X(\Rr)$ and $\ell$ is defined
over $\Rr$, then $p'\in Y(\Rr )$.\\
Now $d-1$ is odd, so $Sec _{\lfloor d/2\rfloor}(Y(\Cc ))$ is generically identifiable and we can use Theorem \ref{i0} to find a non-empty open subset $\Vv \subset \PP^{d-1}(\Rr )$ such that $\PP^{d-1}(\Rr )$ has euclidean dimension
$\le d-2$, each $q\in \Vv$ has admissible rank $d/2$ and $\sharp (VPS(q)) =1$ for all $q\in \Vv$. Since $d/2>1$, there is an open subset
$\Vv'$ of $\Vv$ such that $p' \notin S_q$ for all $q\in \Vv '$, where $S_q$ is the unique element of $VSP(q)$. Note that $\Vv \setminus \Vv'$ has euclidean dimension $\le d-2$
and so $\PP^{d-1}(\Rr )\setminus \Vv '$ has euclidean dimension $\le d-2$. 
\\ Now we have simply to lift up $\Vv '$: consider $\Uu ':= \ell ^{-1}(\Vv ')$, the set $\PP^r(\Cc )\setminus \Uu '$ has clearly euclidean dimension $\le d-1$.
Fix any $a\in \Uu'$, call $b:= \ell (a)$ and take $ \{S_b\} := VPS(b)$. We have $\sigma (S_b)=S_b$ and $p'\notin S_b$. Since $p'\notin S_b$, there
is a unique set $S_a\subset X(\Cc)\setminus \{p\}$ such that $\ell (S_a) =S_b$. Now the set $S$ we are looking for is nothing else than $S:= \{p\}\cup S_a$. In fact since $\sigma (S_b)=S_b$, $\ell$ is defined over $\Rr$ and
$p\in X(\Rr)$, we have $\sigma (S)=S$. Hence each $q\in \Uu'$ has admissible rank $\le 1+d/a$. To get $\Uu$ it is sufficient to intersect $\Uu '$ with the set
of all $q\in \PP^r(\Rr)$ with $\Cc$-rank $1+d/2$.\end{proof}

Landsberg and Teitler gave an upper bound concerning the $X$-rank over $\CC$ (\cite[Proposition 5.1]{lt}). Several examples in \cite{bs} show that
this upper bound is not always true, not even for typical ranks, over $\RR$. The case for labels is easier and we adapt the proof of \cite[Proposition 5.1]{lt} in the following way.

\begin{proposition}\label{i00}
Let $X$ be a geometrically integral variety defined over $\Rr$ and equipped with an embedding $X\subset \PP^r$ defined over $\Rr$ and of dimension $m$. 
\begin{itemize}
\item If either $r-m+1$ is even or $X(\Rr)$ is Zariski dense in $X(\Cc)$, then each $P\in \PP^r(\Rr)$ has a label $(s,a)$ with $s\le r-m+1$.
\item If $r-m+1$ is odd and $X(\Rr )$ is not Zariski dense in $X(\Cc)$, then $P$ has a label $(s,a)$ with either $s\le r-m+1$ or $s=r-m+1$ and $a=0$.
\end{itemize}
\end{proposition}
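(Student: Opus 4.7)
My plan is to imitate the iterated-projection proof of \cite[Proposition 5.1]{lt}, adapted so that every auxiliary choice is $\sigma$-equivariant. Induct on $r-m$; the base case $r=m$ forces $X=\PP^m$ (the only non-degenerate integral subvariety of $\PP^m$ of dimension $m$), so every $P\in\PP^m(\Rr)$ already has label $(1,0)$.

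For the inductive step I use one of two projection moves. If $X(\Rr)$ is Zariski dense in $X(\Cc)$, pick a very general $x\in X(\Rr)$ and consider the rational projection $\ell_x: \PP^r \to \PP^{r-1}$ from $x$. This map is defined over $\Rr$, the image $X'\subset\PP^{r-1}$ of $X$ is a non-degenerate integral variety of dimension $m$ defined over $\Rr$ with $X'(\Rr)$ Zariski dense, and $\ell_x$ restricts to a birational morphism $X\to X'$. The inductive hypothesis applied to $\ell_x(P)\in\PP^{r-1}(\Rr)$ gives a $\sigma$-invariant decomposition of size $\le r-m$, which lifts uniquely through $\ell_x$; adjoining $x$ produces a $\sigma$-invariant decomposition of $P$ of size $\le r-m+1$ that contributes $(1,0)$ to the label at this step. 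When $X(\Rr)$ is not Zariski dense, I substitute the center by the $\Rr$-line $M=\langle y,\bar y\rangle$ spanned by a very general conjugate pair $\{y,\bar y\}\subset X(\Cc)$; the projection $\ell_M: \PP^r \to \PP^{r-2}$ is defined over $\Rr$, the image $X''$ of $X$ is non-degenerate of dimension $m$ for generic $y$, and the analogous lift-and-adjoin contributes $(2,1)$ to the label.

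The parity distinction between the two bullets comes from how the chain of projections terminates. Iterating the single-point move is possible as long as $X(\Rr)$ is dense, and after $r-m$ such steps one lands in $\PP^m$ with a total of $r-m+1$ real points, giving label $(r-m+1,0)$: this settles the "Zariski dense" clause of the first bullet. Pair-projections lower the ambient dimension by $2$ and can be chained $\lfloor (r-m)/2\rfloor$ times: when $r-m+1$ is even they combine with one extra real step (available on at least one of the intermediate varieties, whose real locus need not be dense but is nonempty whenever needed) to terminate in $\PP^m$ with at most $r-m+1$ points total, covering the remaining clause of the first bullet. When $r-m+1$ is odd the pair-chain lands cleanly in $\PP^m$; the final lift in $\PP^m$ is either a single real point (yielding the alternative $s=r-m+1$, $a=0$ of the second bullet) or a further conjugate pair, still within $s\le r-m+1$.

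The main obstacle is the lift step. For a $\sigma$-invariant decomposition $S'$ of $\ell(P)$ to lift to a $\sigma$-invariant $S\subset X(\Cc)$ of the same cardinality, the restriction $\ell|_X$ must be birational onto its image and $S'$ must lie in the open locus where the birational inverse is regular. Both conditions are arranged by choosing the projection center very generally: the "bad" centers form a proper Zariski-closed subset of $X(\Rr)$ in the single-point move, or of the symmetric square of $X(\Cc)\setminus X(\Rr)$ modulo conjugation in the pair move, and their complements are non-empty under the density hypotheses of the corresponding bullets.
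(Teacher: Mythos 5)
Your iterated-projection scheme is genuinely different from the paper's argument, which is a one-shot adaptation of Landsberg--Teitler: take a \emph{general} linear space $H$ of dimension $r-m$ through $P$ that is real (possible because the Grassmannian of such spaces is rational, so its real points are Zariski dense), observe that $S:=H\cap X(\Cc)$ is a $\sigma$-invariant set of $\deg(X)$ points in general position in $H$, and extract a $\sigma$-invariant spanning subset of the right cardinality by elementary parity bookkeeping on real points versus conjugate pairs. Unfortunately your induction has a real gap at its central step, the lift. The inductive hypothesis applied to $\ell_x(P)$ and $X'$ only asserts that \emph{some} $\sigma$-invariant decomposition $S'$ of size $\le r-m$ exists; it gives you no control over where $S'$ sits in $X'$. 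So you cannot arrange for $S'$ to avoid the locus where $\ell_x|_X$ fails to be injective by ``choosing the projection center very generally'': for each fixed center $x$ the induction hands you one uncontrolled $S'$, and varying $x$ changes $X'$, $\ell_x(P)$ and $S'$ simultaneously, so no general-position argument applies. To repair this you would have to strengthen the inductive statement (e.g.\ ``for every proper closed $Z\subsetneq X$ there is a $\sigma$-invariant decomposition of size $\le r-m+1$ avoiding $Z$''), which is essentially what the paper's Bertini argument delivers in one stroke.

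The endgame is also not sound as written. Your chain terminates in $\PP^{m+1}$ or $\PP^m$, where the projection moves stop behaving: projecting a degree-$d$ hypersurface from one of its points is generically $(d-1)$-to-$1$, not birational, and projecting a hypersurface from a line drops the dimension of the image, so ``lifts uniquely'' fails exactly where you need it. Moreover, in the first bullet with $r-m+1$ even, $X(\Rr)$ may be empty (it is only assumed not Zariski dense), so the ``one extra real step available on at least one of the intermediate varieties'' is unjustified: an intermediate image may acquire real points, but the unique lift of such a point to $X$ would be $\sigma$-fixed, hence real, contradicting $X(\Rr)=\emptyset$. Finally, in the odd non-dense case your final lift requires a single real point of $X$, which need not exist; the correct conclusion there (and what the paper's proof actually yields, the statement of the second bullet notwithstanding) is a label $(r-m+2,(r-m+2)/2)$ coming from taking one extra conjugate pair. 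All of these difficulties evaporate in the paper's version because the entire $\sigma$-invariant set $H\cap X(\Cc)$ is produced at once from a general real $H$ through $P$, and the only combinatorial issue is whether a $\sigma$-invariant subset of prescribed cardinality can be extracted.
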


\begin{proof}
Fix $P\in \PP^r(\Rr )$. If 
$P\in X(\Rr )$  it has $(1,0)$ as its unique label. 
\\
Now assume $P\notin X(\Rr )$. 
Let $U$ be the set of all linear spaces $H\subset \PP^r(\Cc)$ defined over $\Cc$, containing $P$ and transversal to $X(\Cc)$. By Bertini's theorem, $U$ is a non-empty open subset of the Grassmannian $\mathbb{G}_{\Cc } := \mathbb{G}(r-m-1,r-1)$ of all $(r-m)$-dimensional linear subspaces of $\PP^r(\Cc )$ containing $P$. Since $X$ and $P$ are defined over
$\Rr$, $\mathbb{G}_{\Cc}$ is also defined over $\Rr$. 
\\
By definition of $U$, for  each $H\in U$ we have that $P\in H$ and  $H$ intersects  $X(\Cc)$ in $\deg(X)$ distinct points. There is a non-empty open subset $V$ of $U$
such that every $H\in V$ has the following property: any $S_1\subseteq H\cap X(\Cc )$ with $\sharp (S_1) = r-m+1$ spans $H$. Since $\mathbb{G}_{\Cc}$ is a Grassmannian, this implies that $V(\Rr )$ is Zariski dense in $V$ and in particular $V(\Rr )\ne \emptyset$.
Take $H\in V(\Rr)$. The set $S:= H\cap X(\Cc )$ is formed by $\deg(X)$ points of $X(\Cc)$ and $\sigma (S) =S$. 
\\
Assume for the moment that
either $r-m+1$ is even or $S\cap X(\Rr ) \ne \emptyset$. We may find $S_1\subseteq S$ such that $\sharp (S_1)=r-m+1$
and $\sigma (S_1) =S_1$. Hence $S_1$ is a set with a label $(r-m+1,a)$ for some integer $a$. 
\\
If $r-m+1$ is odd and $H\cap X(\Rr )=\emptyset$, then we have
a set $S_2\subseteq S$ with $\sharp (S_2)=r-m+2$ with $\sigma (S_2)=S_2$ and hence $P$ has a label $(r-m+2,(r-m+2)/2)$.
\\
Finally if $r-m+1$ is odd and  $X(\Rr )$ is Zariski dense in $X(\Rr )$,  the set of all $H\in V(\Rr )$ with $H\cap X(\Rr )\ne \emptyset$ is non-empty
(and Zariski dense in $V$). \end{proof}

\subsection{Typical $a$-ranks}\label{S3}
As above, let $X\subset \PP^r$ be a geometrically integral non-degenerate variety defined over $\Rr$ and with a smooth real point, i.e. with
$X(\Rr)$ Zariski dense in $X(\Cc)$. 
\begin{notation}
We use $\langle \ \rangle $ or $\langle \ \rangle _\Cc$ to denote linear span over $\Cc$ inside $\PP^r(\Cc)$.
For any $S\subseteq \PP^r(\Rr )$ let $\langle S\rangle _\Rr$ be its linear span in $\PP^r(\Rr)$. Note that $\langle S\rangle _\Cc \cap \PP^r(\Rr ) =\langle S\rangle _\Rr$.
\end{notation}
Here we consider the case in which $Sec _s(X) =\PP^r$ and hence almost never we have generic uniqueness and we won't be able to apply Theorem \ref{i0}. 
The notion of typical rank may be generalized in the following way. 

Fix an integer $a\ge 0$. For any set of complex points $P_1,\dots ,P_a\in X(\Cc)\setminus X(\Rr)$
the linear space $L:= \langle \{P_1,\sigma (P_1),\dots ,P_a,\sigma (P_a)\}\rangle _\Cc$ is defined over $\Rr$ and hence $L\cap \PP^r (\Rr)$ is a linear space
with $\dim _\Rr L\cap \PP^r(\Rr) = \dim _\Cc L$ and $L = (L\cap \PP^r (\Rr))_\Cc$.

\begin{definition}[$a$-$X(\Rr)$-rank]\label{aXtyp}
 For any $P\in \PP^r(\Rr)$, the \emph{$a$-$X(\Rr)$-rank} of $P$ is the minimal integer $c$
such that there are $P_1,\dots ,P_a\in X(\Cc)\setminus X(\Rr)$ and $Q_1,\dots ,Q_c\in X(\Rr)$ such that $P \in \langle \{P_1,\sigma (P_1),\dots ,P_a,\sigma (P_a),Q_1,\dots ,Q_c\}\rangle _\Cc$. 
\end{definition}
The $a$-typical $X(\Rr )$-ranks are the integers occurring as $a$-ranks a non-empty euclidean open subset of $\PP^r(\Rr )$. 

Note that $0$ is typical
if and only if $Sec _a(X(\Cc )) =\PP^r(\Cc )$.
The proof of \cite[Theorem 1.1]{bbo} easily prove the following result.
\begin{proposition}\label{oo1}
All the integers between two different $a$-typical ranks are $a$-typical ranks.
\end{proposition}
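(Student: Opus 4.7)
The plan is to mimic the gap-free argument for typical real ranks of \cite[Theorem 1.1]{bbo}, treating the $a$ fixed pairs of complex-conjugate points as an inert ``background'' linear subspace that does not interfere with the rank-incrementation mechanics. Fix two $a$-typical ranks $c_1<c_2$ and an integer $c$ with $c_1<c<c_2$; let $U_1,U_2\subset \PP^r(\Rr)$ be non-empty euclidean open subsets on which the $a$-$X(\Rr)$-rank equals $c_1$ and $c_2$ respectively, which exist by the very definition of $a$-typical. The goal is to produce a non-empty euclidean open subset of $\PP^r(\Rr)$ on which the $a$-$X(\Rr)$-rank equals $c$.

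The central technical input is the analog of the one-step lemma used in \cite{bbo}: if $P\in \PP^r(\Rr)$ admits an $a$-decomposition with $k$ real summands $Q_1,\dots ,Q_k\in X(\Rr)$ in addition to the $a$ fixed complex conjugate pairs $\{P_i,\sigma(P_i)\}$, and $Q\in X(\Rr)$ is a generic real point, then along a generic real line through $P$ in the direction of $Q$ the $a$-$X(\Rr)$-rank equals $k+1$ for general parameters while dropping to $k-1$ on a codimension-one locus. Setting $L_\mathcal{P}:=\langle P_1,\sigma(P_1),\dots ,P_a,\sigma(P_a)\rangle_\Cc$, which is defined over $\Rr$ since $\sigma(L_\mathcal{P})=L_\mathcal{P}$, linear projection $\pi_\mathcal{P}$ away from $L_\mathcal{P}$ reduces the assertion to the classical one-step statement for the real $X(\Rr)$-rank of $\pi_\mathcal{P}(P)$ with respect to the projected variety, which is precisely the content of the proof of \cite[Theorem 1.1]{bbo}.

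Granted this lemma, one follows the BBO template. By a Chevalley/Tarski--Seidenberg argument identical to Lemma \ref{c1} (with one additional parameter tracking the $a$ conjugate pairs), each level set $\{P\in \PP^r(\Rr)\mid r_a(P)=k\}$ is semialgebraic. A continuous path in $\PP^r(\Rr)$ from a point of $U_1$ to a point of $U_2$, supplemented by perturbations in generic directions $tQ$ with $Q\in X(\Rr)$ as furnished by the one-step lemma, shows that the $a$-$X(\Rr)$-rank cannot jump by more than one along these perturbations; consequently every integer between $c_1$ and $c_2$ is attained, and by the semialgebraic structure each such integer is attained on a subset of full euclidean dimension, hence on a non-empty euclidean open set.

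The main obstacle is the careful verification of the one-step lemma, namely that a generic perturbation by a real point of $X(\Rr)$ decreases the $a$-$X(\Rr)$-rank by \emph{exactly} one (not more) on a positive-dimensional locus. This uses the Zariski density of $X(\Rr)$ in $X(\Cc)$ together with the genericity of the added point, exactly as in \cite{bbo}; the $a$ frozen complex-conjugate pairs contribute only a fixed $\sigma$-invariant linear offset over which the BBO argument goes through essentially unchanged, which is the sense in which ``the proof of \cite[Theorem 1.1]{bbo} easily proves'' the present proposition.
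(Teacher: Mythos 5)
Your proposal is correct and follows essentially the same route as the paper, which itself gives no written proof but simply asserts that the argument of \cite[Theorem 1.1]{bbo} carries over: you expand that pointer by treating the $a$ conjugate pairs as a $\sigma$-invariant linear span defined over $\Rr$ and running the BBO one-step (rank changes by at most one when a general real point of $X(\Rr)$ is added or removed) together with the semialgebraicity of the level sets of the $a$-rank. The only nuance worth flagging is that in Definition \ref{aXtyp} the complex pairs are part of the minimization rather than literally ``frozen,'' but this does not affect the argument since the one-step bounds only require reusing the pairs from a given minimal decomposition.
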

One may wonder if a label for a general element in $\PP^r (\Rr)$ of certain typical rank always exists. The answer is ``~no~" and we present an example in the following section by using varieties of sum of powers. 

\subsection{Variety of Sum of Powers} 

We indicate with $\rho$ the generic $X$-rank of $\PP^r(\Cc)$
and with $\Uu \subset \PP^r(\Cc )$  a non-empty open subset
of $\PP^r(\Cc )$ of points of generic rank: $r_{X(\Cc )}(P) = \rho$ for all $P\in \Uu$. Since $X$ is defined over $\Rr$,  also all point $P\in \sigma (\Uu)$ have the same generic rank.
Now if $$U:= \Uu \cap \sigma (\Uu)
\hbox{ and }V:= U\cap \PP^r(\Rr),$$  then the real part $V$  of $U$ is a non-empty open subset of $\PP^r (\Rr)$ whose complement has dimension smaller than $\rho$. 
For all the points $P$ of generic rank, the variety of sum of powers $VSP(P)$, recalled in  Definition \ref{vsp}, is non-empty. Since
$P\in \PP^r$, we have $\sigma (VSP(P)) = VSP(P)$ and hence the constructible set $VSP(P)$ is defined over $\Rr$. 
Now the real part of  $VSP(P)$,  $VSP(P)(\Rr )$ is non empty if and
only if there is an integer $a$ with $0\le a\le \rho /2$ and $P$ has a label $(\rho ,a)$. In a few cases $VSP(P)$ is known (see e.g. \cite{bc}, \cite{bco}, \cite{cov}).

The following example says that $VSP(P)$ may not have a label for a general $P\in \PP^r (\Rr)$ with generic rank over $\Cc$. However in this case $VSP(P)$ is finite
and we do not have positive dimensional examples. If $VSP(P)$ is finite, we have $VSP(P)(\Rr )\ne \emptyset$ (and so $P$ has a label) if $\sharp (VSP(P))$ is odd.

\begin{example}\label{bo1}
Let $C$ be a smooth elliptic curve defined over $\Rr$. Take two points $P'_1,P'_2\in C(\Cc )\setminus C(\Rr )$ and set
$P''_i:= \sigma (P'_i)$. By assumption $P''_i\ne P'_i$. For general $P'_1,P'_2$, we have $P''_1\ne P'_2$ and so $P'_1 \ne P'' _2$.

Let $E$ be a geometrically integral curve of arithmetic genus $3$ and with exactly $2$ singular points $O'$ and $O''$, both of them ordinary nodes, where $C$
has
its normalization and with $O'$ obtained gluing together the set $\{P'_1,P'_2\}$ and $O''$ obtained gluing together the set $\{P''_1,P''_2\}$. The involution $\sigma$
acts on $E$ with $\sigma (O')=O''$. For general $P'_1,P'_2$ the divisors $P'_1+P'_2$ and $P''_1+P''_2$ are not linearly equivalent
and hence $E$ is not hyperelliptic. Hence $\omega _E$ is very ample and embeds $E$ in $\PP^2(\Cc )$ as a degree $4$ curve with $2$ ordinary nodes, $Q_1$, $Q_2$,
and no other singularity.  Since $\sigma (O') =O''$ and $O'\ne O''$, we have that for $i=1,2$, $\sigma (Q_i) =Q_{3-i}$ and $Q_i\in \PP^2(\Cc)\setminus \PP ^2(\Rr)$.

Let $u: C \to E$ denote the normalization map. Set $\Oo _C(1):= u^\ast (\omega _E)$. Since $\omega _C\cong \Oo _C$, we have
$P'_1+P''_1+P'_2+P''_2 \in |\Oo _C(1)|$. Thus $\Oo _C(1)$ is a degree $4$ line bundle on $C$ defined over $\Rr$ and the complete linear system $|\Oo _C(1)|$ induces an embedding $j: C\to \PP^3(\Cc)$ defined over
$\Rr$ and with $X:= j({C})$ a smooth elliptic curve of degree $4$. The pull-back by $u$ of the linear system $|\omega _E|$ is a codimension $1$ linear subspace of the $3$-dimensional
$\Cc$-linear space $|\Oo _C(1)|$, the one associated to the $\Cc$-vector space of all rational $1$-forms on $C$ with poles only at the points $P'_1$, $P'_2$, $P''_1$, $P''_2$,
each of them at most of order one and such that the sum of their residues at the $4$ points $P'_1$, $P'_2$, $P''_1$, $P''_2$ is zero.
Since $\sigma (\{P'_1,P'_2,P''_1,P''_2\}) =\{P'_1,P'_2,P''_1,P''_2\}$, this $2$-dimensional linear subspace of $|\Oo _C(1)|$ is defined by one equation $\ell =0$ with $\sigma (\ell )=\ell$, i.e.
it is defined over $\Rr$. Since this is the linear system $|\omega _E|$ whose image is the curve that we indicate with $D$, then $D$
is defined over $\Rr$.
Since $D$ is defined over $\Rr$, the curve $D$ is the image of $X(\Cc)$ by the linear projection from a point $P\in \PP^3(\Rr )\setminus X(\Rr )$. By construction  $X$ has exactly $2$ secant lines passing through $Q$ and they are the lines $L'$, $L''$ with $L'$ spanned by $j(P'_1)$ and $j(P'_2)$ and $L''$ spanned by $j(P''_1)$ and $j(P''_2)$. Now, since $L'\ne L''$, we get
that $Q$ has no label. Now take $P\in \PP^3(\Rr )$ near $Q$ in the euclidean topology. The linear projection from $Q$ is a geometrically integral curve $D_p$ defined
over $\Rr$, with $C$ as its normalization and near $D$ and so with two ordinary nodes $Q_1({P})$ and $Q_2({P})$ near $Q_1$ and $Q_2$. Thus none of these
nodes is defined over $\Rr$. Therefore there is an euclidean neighborhood $V$ of $Q$ in $\PP^3(\Rr)$ such that no $P\in V$ has a label.\end{example}

\begin{remark}
 If $VSP(P)$ is finite, we have $VSP(P)(\Rr )\ne \emptyset$ (and so $P$ has a label) if $\sharp (VSP(P))$ is odd. If $r=3$ and $X$ is a smooth curve of genus $g$ and degree
 $d$ the genus formula for plane curves gives $\sharp (VSP(P)) = (d-1)(d-2)/2 -g$. When $\Ss (P,X,\Cc )$ is infinite, it is not clear that at least one irreducible component
of $\Ss (P,X,\Cc )$ is $\sigma$-invariant and hence defined over $\Rr$.
\end{remark}

\subsection{Polynomials with admissible rank bigger than complex rank}\label{ex2}

We use elliptic curves and \cite[Example 3.4]{bc} to construct an example of a pair $(X,P)$ with $\sharp (\Ss (P,X,\Cc ))=2$ and $\Ss (P,X,\Cc )(\Rr ) =\emptyset$ with $P$ a symmetric tensor and with $X$ a $d$-Veronese embedding
of $\PP ^n$, $n\ge 2$ and $d$ even (here $r = \binom{n+d}{n} -1$) and $r_{X(\Cc )}({P}) = 3d/2$.

\begin{example}\label{iii1} We consider the Veronese variety $X$ of dimension $n\ge 2$  embedded into $\PP^{\binom{n+d}{n}-1}$ with the complete linear system $|\mathcal{O}(d)|$ with 
 $d\ge 6$ and $d$ even. Set $k:= d/2$. 
The projective space $\PP^n$ and the embedding 
are defined over $\Rr$.  

Let $E$ be a smooth curve of genus $1$ defined over $\Rr$ and with $E(\Rr )\ne \emptyset$ (so if $\Rr =\RR$, then $E(\Rr)$ is homeomorphic either to a circle or to the disjoint union of $2$ circles). Fix $O\in E(\Rr )$. By Riemann-Roch the complete linear system $|\Oo _E(3O)|$ defines an embedding $j: E\to \PP^2$ defined over $\Rr$ and with
$j(E)$ a smooth plane cubic. With an $\Rr$-linear change of homogeneous coordinates $x,y,z$ we may assume that $j(O) =(0:1:0)$ and that $j(E) = \{y^2z= x^3+Axz^2+Bz^3\}$
for some $A, B\in \Rr$. The restriction to $j(E)\setminus \{O\}$ of the linear projection $\PP^2\setminus \{O\}\to \PP^1$ induces the morphism $\phi : E\to \PP^1$ which is induced by the complete linear system $|\Oo _E(2O)|$. Since $O\in E(\Rr )$, $\phi$ is defined over $\Rr$. If $t\in \Rr$ and $-t \gg 0$, then $t^3+At+B<0$ and so $y^2=t^3+At+B$ has two solutions $q', q'' \in \Cc$ with $q'' =\sigma (q')$. Hence we may find $S' =\{q_1,\dots ,q_{3k}\}\subset E(\Cc )\setminus E(\Rr)$ such that $\sharp (S'\cup \sigma (S')) =6k$ and $q_i+\sigma (q_i)
\in |\Oo _E(2O)|$ for all $i$. Note that $\sum _{i=1}^{3k} q_i + \sum _{i=1}^{3k} \sigma (q_i) \in |\Oo _E(6kO)|=|\mathcal{O}_E(3dO)|$.

Now fix a plane $M\subseteq \PP^n$ defined over $\Rr$ and identify the $\PP^2$ where $j(E)$ is embedded  with $M$ so that $j(E)\subset \PP^n$. If $\nu_d$ is the Veronese embedding that maps $\mathbb{P}^n$ to $X$, we can now embedd $E$ into $X\subset \mathbb{P}^{r:= \binom{n+d}{n}-1}$ and define $C:= \nu _d(j(E))$, $Q:= \nu _d(j(O))$, $S:= \nu _d(S')$ and consider the space spanned by $C$: $\Lambda _\Cc := \langle C\rangle _\Cc$. Since the smooth plane curve $j(E)$ is projectively normal,
the embedding $E\to \Lambda _\Cc$ is induced by the complete linear system $|\Oo _C(3dQ)|$ and so $\dim _\Cc \Lambda _\Cc = 3d-1 =6k-1$. Since $\nu _d$, the inclusion $M
\subseteq \PP^n$ and $j$ are defined over $\Rr$, $\Lambda _\Cc \cap \PP^r(\Rr )$ is an $\Rr$-projective space of dimension $6k-1$.
Let $Z\subset C(\Cc)$ be any set with $a:= \sharp (Z) \le 6k$. Since $C$ is embedded in $\Lambda _\Cc$ by the complete linear system $|\Oo _C(6kQ)|$, Riemann-Roch gives
that $\dim _\Cc (\langle Z\rangle _\Cc )=a-1$, unless $a=6k$ and $Z\in |\Oo _C(6kQ)|$. If $Z\in |\Oo _C(6kQ)|$, then $\langle Z\rangle _\Cc$ is a hyperplane of $\Lambda _\Cc$.
Hence $\dim _\Cc (\langle S\rangle )=\dim (\langle \sigma (S)\rangle ) =3k-1$ and $\dim _\Cc (\langle S\cup \sigma (S) \rangle )= 3k-2$.
By the Grassmann's formula the set $\langle S\rangle \cap \langle \sigma (S)\rangle$ is a unique point, $P$. Since $\sigma ({P}) \in \langle S\rangle \cap \langle \sigma (S)\rangle$,
we have $\sigma ({P}) =P$, i.e. $P\in \PP^r(\Rr )$. Since $P\in \langle S\rangle _\Cc$ and $C\subset X$, we have $r_{X(\Cc )}({P}) \le 3k$.
Over $\Cc$ this is the construction of \cite[Example 3.2]{bc}. In \cite[Example 3.2]{bc} it is proved that $r_{X(\Cc )}({P}) =3k$ and that $\sharp (\Ss (P,X,\Cc ))=2$. Hence $\Ss (P,X,\Cc ) =\{S,\sigma (S)\}$.
Since $S\ne \sigma (S)$, we have $\Ss (P,X,\Cc )(\Rr)=\emptyset$ and hence $P$ has admissible rank $>3k$.
\end{example}

\providecommand{\bysame}{\leavevmode\hbox to3em{\hrulefill}\thinspace}

\end{document}